\documentclass[twoside]{amsart}
\usepackage{amsmath,amssymb,amscd,amsthm,amsxtra,amsfonts}
\usepackage{mathtools,mathrsfs,dsfont,xparse}
\usepackage{graphicx,epstopdf}
\usepackage{multirow}
\usepackage{cases}
\usepackage{enumerate}
\usepackage{xcolor}
\usepackage{tikz,pgfplots}
\usetikzlibrary{matrix}
\usepackage{mathtools}
\usepackage[margin=1 in]{geometry}
\usepackage[numbers,sort]{natbib}
\usepackage{cancel}

\mathtoolsset{showonlyrefs}

\allowdisplaybreaks[4]

\sloppy

\hfuzz  = 0.5cm

\makeatletter
\newcommand{\dotDelta}{{\vphantom{\Delta}\mathpalette\d@tD@lta\relax}}
\newcommand{\d@tD@lta}[2]{%
  \ooalign{\hidewidth$\m@th#1\mkern-1mu\cdot$\hidewidth\cr$\m@th#1\Delta$\cr}%
}
\makeatother

\date{}

\usepackage{multicol,bbm}
\usepackage[colorlinks=true, pdfstartview=FitV, linkcolor=blue, citecolor=blue, urlcolor=blue]{hyperref}

\pagestyle{myheadings}
\numberwithin{equation}{section}

\usepackage{cjhebrew}

\DeclareMathOperator{\re}{Re}

\usepackage[numbers,sort]{natbib}

{\theoremstyle {definition} \newtheorem {defn} {Definition} [section] }
{\theoremstyle {plain}  \newtheorem {thm} [defn] {Theorem}}
{\theoremstyle {plain}  }
{\theoremstyle {plain} \newtheorem {prop} [defn]{Proposition}}
{\theoremstyle {plain} }
{\theoremstyle {definition} \newtheorem {rmk}[defn] {Remark}}
{\theoremstyle {plain} }

\def\R{{\mathbb{R}}}
\def\C{{\mathbb{C}}}
\def\N{{\mathbb{N}}}

\newcommand{\abs}[1]{\lvert#1\rvert}
\newcommand{\norm}[1]{\left\|#1\right\|}

\newcommand{\sq}[1]{\left[#1\right]}

\begin{document}

\author[Lee and Yu]{Zachary Lee and Xueying Yu}

\address{Zachary Lee 
\newline \indent Department of Mathematics, The University of Texas at Austin \indent 
\newline \indent 
2515 Speedway, PMA 8.100, Austin, TX 78712
\newline \indent 
And
\newline \indent 
Department of Mathematics, MIT \indent 
\newline \indent 77 Massachusetts Ave, Cambridge, MA 02139\indent 
}
\email{zl9868@my.utexas.edu}

\address{Xueying  Yu
\newline \indent Department of Mathematics, Oregon State University\indent 
\newline \indent  Kidder Hall 368
Corvallis, OR 97331 \indent 
\newline \indent 
And 
\newline \indent 
Department of Mathematics, University of Washington, 
\newline \indent  C138 Padelford Hall Box 354350, Seattle, WA 98195}
\email{xueying.yu@oregonstate.edu}

\title[]{On recovering the nonlinearity for generalized higher-order Schr\"odinger equations}

\subjclass[2020]{}

\keywords{}

\begin{abstract}
In this work, we generalize the nonlinearity-recovery result in \cite{HMG} for classical cubic nonlinear Schr\"odinger equations to  higher-order Schr\"odinger equations with a more  general nonlinearity. More precisely, we consider a spatially-localized nonlinear higher-order Schr\"odinger equation and recover the spatially-localized coefficient by the solutions with data given by small-amplitude wave packets.

\end{abstract}

\maketitle

\setcounter{tocdepth}{1}
\tableofcontents

\parindent = 10pt     
\parskip = 8pt

\section{Introduction}

In this work, we consider the following spatially localized nonlinear higher-order Schr\"odinger equations with a general nonlinearity
\begin{align}\label{eq NLS}
\begin{cases}
(i \partial_t + \frac{1}{2n} (-\Delta)^{n}) u = \beta (x) G (\abs{u}^2) u , \\
u(-T, x) = u_0 . 
\end{cases}
\end{align}
Here the solution $u : \R_t \times \R_x^d \to \C$ is a complex-valued function of time and space ($d\geq 1$), and we take  $T>0$ and $\beta \in C_c^{\infty} (\R^d)$ is nonnegative. The operator $(-\Delta)^n$ with $n \in \N_+$ is a power of the standard Laplacian $-\Delta=-\sum_{n=1}^d \partial^2_{x_n}$. In addition, the nonlinearity $G:\R_{\ge 0} \to\R_{\ge 0}$ vanishes at and is real-analytic in a neighborhood of the origin. In other words, we can write
\begin{align}
    G(x)=\sum_{k\ge 1}\frac{a_k}{k!} x^k
\end{align}
converging absolutely for $|x|<R$ for some $R>0$ that we fix throughout the paper.

The goal of this paper is to determine the nonlinear coefficient $\beta$ in \eqref{eq NLS}. Before we start the proof, let us briefly review the history. Hogan, Murphy, and Grow \cite{HMG} previously studied a similar problem for the second order case nonlinear Schr\"odinger equation (NLS) with a cubic nonlinearity ($n=1$, $G(x) =x$ in \eqref{eq NLS}). We employ a similar approach in their work, which itself is adapted from S\'a Barreto and Stefanov \cite{BS1, BS2}, who considered a similar recovery problem for the cubic wave equation (and more general nonlinear wave models).  We show that solutions to \eqref{eq NLS} with data given by small-amplitude wave packets generate phase that determines the X-ray transform of $\beta$, essentially the integrals of $\beta$ on all possible lines in its domain.

Now we state the main result of this work. 
\begin{thm}[Main result]\label{thm Main}
Let $d \geq 1$ be any dimension,  and $p>n$, $a_0, \beta\in \mathcal{S}(\mathbb{R}^d)$ with $\beta$ compactly supported. Fix $T>0$ large enough that the support of $\beta$ is contained in $\{x: |x|< T\}$. Given $\xi\in\mathbb{R}^d$ with $|\xi|=1$, define 
\begin{align}
a (t,x) = a_0 (x+ \varepsilon^{1-2n} t \xi) \,\exp\left(-i \varepsilon^{-2n} G(\varepsilon^{2p} \abs{a_0(x + \varepsilon^{1-2n} t \xi) }^2) \int_0^t \beta (\varepsilon^{-1} x + \varepsilon^{-2n} (t-s)\xi -T\xi)  \, ds\right) .
\end{align}
and
\begin{align}
v (t,x) : = \varepsilon^p a (\varepsilon^{2n} (t+T), \varepsilon (x + T\xi)) e^{ i (x \cdot \xi +  \frac{1}{2n} t)}.
\end{align}
Let $u_0(x)=v(-T,x)$. Then for $\varepsilon>0$ sufficiently small, the solution $u$ to \eqref{eq NLS} exists and satisfies  
\begin{align}\label{error}
    \norm{u - v}_{L_t^{\infty} \mathcal{F}L^1 ([-T,T] \times \R^d)} \lesssim  \max \{ \varepsilon^{p+2}, \varepsilon^{3p-2n- } \}.
\end{align}
\end{thm}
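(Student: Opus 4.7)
The plan is the classical approximate-solution strategy in the Wiener algebra $\mathcal{F}L^1$: verify that $v$ solves the nonlinear equation up to a small residual $\mathcal{E}$, then control the true error $w := u - v$ via a Duhamel iteration.

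The first task is to compute $\mathcal{E} := (i\partial_t + \frac{1}{2n}(-\Delta)^n) v - \beta(x) G(\abs{v}^2) v$. Writing $v = \varepsilon^p A(t,x)\, e^{i(x\cdot\xi + t/(2n))}$ with $A(t,x) = a(\varepsilon^{2n}(t+T),\, \varepsilon(x + T\xi))$ and using $\abs{\xi}=1$ together with the identity $(-\Delta)^n(A e^{ix\cdot\xi}) = e^{ix\cdot\xi}(1 - 2i\xi\cdot\nabla - \Delta)^n A$, I would expand the symbol $(1 - 2i\xi\cdot\nabla - \Delta)^n = 1 + n(-2i\xi\cdot\nabla - \Delta) + O(\nabla^2)$. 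The constant term cancels against $e^{it/(2n)}$ under $i\partial_t$, and the first-order term yields $-2in\varepsilon\,\xi\cdot\nabla_z a$. A direct verification shows that the transport field $\xi\cdot\nabla_z - \varepsilon^{2n-1}\partial_s$ annihilates both $a_0(z + \varepsilon^{1-2n} s\xi)$ and each integrand $\beta(\varepsilon^{-1}z + \varepsilon^{-2n}(s-\sigma)\xi - T\xi)$, so that acting on $a$ it produces only the boundary contribution $-\varepsilon^{2n-1}\beta(x)$ from differentiating the upper limit. Using $\abs{a} = \abs{a_0}$ (the exponent is purely imaginary), the leading piece of $\mathcal{E}$ collapses to zero: the identity $i\varepsilon^{2n}\partial_s a - i\varepsilon\,\xi\cdot\nabla_z a = \beta(x) G(\varepsilon^{2p}\abs{a}^2)a$ holds exactly, so $\mathcal{E}$ consists only of the $O(\nabla^2)$ remainder of the symbol expansion applied to $A$.

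To estimate this remainder in $\mathcal{F}L^1$ I would exploit the two-scale structure of $A$. The slow envelope $a_0(\varepsilon(x + (t+2T)\xi))$ is Fourier-concentrated at scale $\varepsilon$, so each $\nabla$ costs $\varepsilon$ and the first non-trivial remainder $-\tfrac{1}{2}\Delta A \sim \varepsilon^2$ yields an $O(\varepsilon^{p+2})$ contribution to $\norm{\mathcal{E}}_{\mathcal{F}L^1}$. The phase factor $e^{-iG(\varepsilon^{2p}\abs{a_0}^2)\int_0^{t+T}\beta(x + \mu\xi)\, d\mu}$ has $\mathcal{F}L^1$ amplitude $O(\varepsilon^{2p})$ inherited from $G$ but oscillates at the unit scale of $\beta$; when $(-\Delta)^n$ is distributed onto this factor one gets at worst an $\varepsilon^{-2n-}$ cost against the Fourier content of the X-ray integrand, contributing $O(\varepsilon^{3p-2n-})$. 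Summing the two sources yields $\norm{\mathcal{E}}_{L^\infty_t \mathcal{F}L^1_x([-T,T]\times\R^d)} \lesssim \max\{\varepsilon^{p+2},\,\varepsilon^{3p-2n-}\}$.

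The final step is a Duhamel / continuity argument for $w$, which satisfies the forced equation $(i\partial_t + \tfrac{1}{2n}(-\Delta)^n) w = \beta(x)\big[G(\abs{v+w}^2)(v+w) - G(\abs{v}^2) v\big] - \mathcal{E}$ with $w(-T) = 0$. Using that $e^{it(-\Delta)^n/(2n)}$ is an $\mathcal{F}L^1$-isometry, that $\mathcal{F}L^1$ is a Banach algebra, that $G$ is real-analytic, and that $\norm{v}_{\mathcal{F}L^1} \lesssim \varepsilon^p$, Duhamel gives
\[
\norm{w}_{L^\infty_t \mathcal{F}L^1} \lesssim T\,\norm{\mathcal{E}}_{L^\infty_t \mathcal{F}L^1} + T\varepsilon^{2p}\norm{w}_{L^\infty_t \mathcal{F}L^1} + T\varepsilon^{p}\norm{w}_{L^\infty_t \mathcal{F}L^1}^2 + T\norm{w}_{L^\infty_t \mathcal{F}L^1}^3,
\]
after linearizing $G(\abs{v+w}^2)(v+w) - G(\abs{v}^2)v$ as $[G(\abs{v}^2) + G'(\abs{v}^2)\abs{v}^2]w + G'(\abs{v}^2)v^2\bar{w} + O(w^2)$ and bounding each term by the $\mathcal{F}L^1$ Banach algebra estimate. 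Absorbing the linear-in-$w$ term into the left for $\varepsilon$ small and running a standard bootstrap yields both local existence of $u$ on $[-T,T]$ and the claimed bound. The main obstacle is the $\mathcal{F}L^1$ bookkeeping in step two: one must simultaneously handle the slow envelope at Fourier scale $\varepsilon$ and the unit-scale oscillations of the X-ray phase under the high-order operator $(-\Delta)^n$, and carefully track the $\varepsilon^{-2n-}$ loss coming from the latter.
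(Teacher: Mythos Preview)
Your proposal is correct and follows essentially the same route as the paper: (i) verify that the ansatz $v$ satisfies the equation up to the transport equation for $a$ plus a residual consisting of $j$-th order derivatives of $A$ for $2\le j\le 2n$; (ii) bound this residual in $L^\infty_t\mathcal{F}L^1$; (iii) close via a Duhamel estimate for $w=u-v$, using that the propagator is an $\mathcal{F}L^1$ isometry and that $\mathcal{F}L^1$ is a Banach algebra under the real-analyticity of $G$.

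The only noteworthy difference is in step (ii). The paper works in the rescaled $(s,z)$ variables and passes through the embedding $H^{d/2+}\hookrightarrow\mathcal{F}L^1$, bounding $\|D_z^j a\|_{\mathcal{F}L^1}\lesssim\|a\|_{H^{j+d/2+}}$ and then tracking powers of $\varepsilon$ coming from the prefactor $\varepsilon^{-2n}$, the chain rule on $\beta(\varepsilon^{-1}\cdot)$, the $\varepsilon^{2p}$ from $G$, and an $\varepsilon^{d/2}$ volume factor from the support of $\beta$. You instead stay in the original $(t,x)$ variables and split $A$ into a slow envelope $a_0(\varepsilon\cdot)$ and a unit-scale phase of amplitude $O(\varepsilon^{2p})$, arguing directly in $\mathcal{F}L^1$. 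Both lead to the same bound $\max\{\varepsilon^{p+2},\varepsilon^{3p-2n-}\}$; your phrasing ``at worst an $\varepsilon^{-2n-}$ cost against the Fourier content of the X-ray integrand'' is a bit telegraphic, and in a full write-up you should make explicit which term in the Leibniz/Fa\`a di Bruno expansion of $(-\Delta)^n A$ actually produces the $\varepsilon^{3p-2n-}$ contribution, since the phase itself has $\mathcal{F}L^1$ amplitude only $O(\varepsilon^{2p})$ and is differentiated at unit scale. Your Duhamel closure is exactly the paper's stability result (their Proposition on stability in $\mathcal{F}L^1$), stated in bootstrap form.
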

\begin{rmk}
\begin{enumerate}
\item 
We can obtain the same result by replacing the space $\mathcal{F}L^1(\R^d)$ with $H^{s}(\R^d)$, $s > d/2$, if desired, using essentially the same argument. This is because the key property we rely on for estimating solutions is that the space is an algebra, which is also true for the latter space.

\item 
The condition $p>n$ guarantees that the error between the solution and approximate solution is much smaller (if $\varepsilon>0$ is small) than both the amplitude of the former and the latter. 
\end{enumerate}
\end{rmk}

Theorem \ref{thm Main} shows that by approximately solving \eqref{eq NLS}, we can  recover the X-ray transform of  $\beta$,
\begin{align}
     X(x, \theta)=\int_{\mathbb{R}} \beta(x+t\theta)\,dt \quad \theta, x\in\R^n,  \quad |\theta|=1 .
\end{align}
In other words the X-ray transform of $\beta$ is the integral of $\beta$ on various straight lines in space,  which in turn is enough to reconstruct $\beta$ itself (see e.g. \cite{Deans}) as long as $d\ge 2$. In one dimension, we can only recover the integral of $\beta$. As explained in \cite{HMG}, we first see that the approximate solution $v(T,x)$ contains all lines integrals of $\beta$ if we let $\xi$ range over $\mathbb{S}^{d-1}$.  Additionally, in the regime $p>n$ and $\varepsilon\ll 1$, we have that the error in \eqref{error} is much smaller than the size of both the solution and the approximate solution.

\subsection{Motivation}
Part of the motivation for this work originates in \cite{BS1}, where the authors proved a similar result for the following nonlinear wave equation
\begin{align*}
    \partial_{tt} u -\Delta u +\beta |u|^2 u=0.
\end{align*}
The authors probed the equation with a wave of the form
\begin{align*}
    u(t,x)=h^{-1/2} e^{i(x\cdot\xi -t)/h} \chi(x\cdot\xi -t), \quad \text{where }0<h\ll 1
\end{align*}
outside of the support of $\beta$ and devised a geometric optics approximation similar to the one appearing in \cite{HMG} as well as in Theorem \ref{thm Main}. Our construction of an approximate solution is a modification of the ones in \cite{HMG},\cite{Carles_2010}, which are based on geometric optics solutions of NLS equations. We use the Fourier-Lebesgue space $\mathcal{F}L^1$ to establish well-posedness and stability in a similar manner. As is done in \cite{HMG}, our approximation controls the stronger norm $L^\infty_t\mathcal{F}L^1$.

Theorem \ref{thm Main} generalizes the result in \cite{HMG} (which itself establishes well-posedness and stability for a cubic NLS as well as constructs an approximate solution). See also \cite{Mur, KRMPV, Carles_2008} for related works. The result in \cite{HMG} is closely related to that done in \cite{Wa}, where the authors worked on reconstructing nonlinearities of the form $q(x)|u|^{p-1} u$ from the scattering map data. We require essentially the same smoothness on $\beta$ as in \cite{Wa} as we need to control $d/2+$ derivatives, though the compact support of $\beta$ is needed in the argument since $\varepsilon\to 0$ if we let $T\to\infty$. In this paper, we generalize the order of the NLS to any $n\ge 1$ as well as the nonlinearity to a function $G:\R_{\ge 0}\to\R_{\ge 0}$ vanishing at and real analytic in a neighborhood of the origin. We will explain the necessity of real analyticity in the next subsection.

The other part of the motivation has to do with higher-order Schr\"odinger equations, the study of which is physically interesting as such PDEs can be useful in modeling the behavior of semi-relativistic quantum particles without resorting to the spinor-valued Dirac equation like what is needed for a fully relativistic electron \cite{Carles_2012}. Indeed, for a particle of mass $m$, the non-relativistic Schr\"odinger equation with a potential $V(x)$ (the potential may also include nonlinear interactions in $u$ as is done in the equation considered in this paper) takes the following form,
\begin{align}
    -i\partial_t u= -\frac{1}{2m} \Delta u + V(x)u. 
\end{align}
The kinetic term $-\frac{1}{2m} \Delta$ (where we have set Planck's reduced constant $\hbar=1$ for convenience) represents the kinetic energy operator $|\vec{p}|^2/2m$, with $\vec{p}=-i\nabla$ the momentum opertor. This is the quantization of the usual formula for kinetic energy of a particle of mass $m$ and momentum $\vec{p}\in \mathbb{R}^d$ in classical physics: $E=|\vec{p}|^2/2m$. In relativistic classical mechanics, the kinetic energy must be modified to remain consistent with experimental observations as the speed of the particle approaches a substantial fraction of the speed of light, denoted by $c$. The modified kinetic energy takes the following form:
$$ E(\vec{p})=mc^2 \left(\sqrt{1+|\vec{p}|^2/m^2c^2}-1\right)\in [0, \infty). $$
with
$$\vec{p}=m \frac{\vec{v}}{\sqrt{1-|v|^2/c^2}}\in\R^d$$
and $\vec{v}\in\mathbb{R}^d$ the velocity vector of the particle in a specified reference frame. As long as $|p| <  mc$, meaning that $|v|<\frac{c}{\sqrt{2}}$, then $E(\vec{p})$ can be expressed as the following infinite series
$$E(\vec{p})=mc^2 \sum_{n\ge 1} (-1)^{n+1} \alpha(n) \frac{|\vec{p}|^{2n}}{(mc)^{2n}}, $$
with $\alpha(n)= \frac{1}{n} {2n-2\choose n-1} 2^{-2n+1}.$ A natural attempt to model a semi-relativistic quantum particle would be to cutoff the series for the kinetic energy of the particle at a finite number of terms $N$ and replace the scalar momentum with its quantized operator form $\vec{p}=-i\nabla$:
\begin{align}
-i\partial_t u_N &= V(x)u_N  + mc^2 \sum_{1\le n\le N} (-1)^{n+1} \alpha(n) \frac{(-\Delta)^n} {(mc)^{2n}}u_N  \\
&= V(x)u_N  -\frac{1}{2m} \Delta u_N  + mc^2 \sum_{2\le n\le N} (-1)^{n+1} \alpha(n) \frac{(-\Delta)^n} {(mc)^{2n}}u_N 
\end{align}
Notice that the first term in the series is the non-relativistic kinetic term $-\frac{1}{2m} \Delta$. The higher order terms are seen to be the relativistic corrections in powers of the dimensionless operator $\frac{|\vec{p}|^2}{m^2c^2}$ . The solutions $u_N$ converges in the $L^2$ norm to the solution $u_\infty$ to the exact semi-relativist Schr\"odinger equation  provided $\widehat{u_N}(t=0)$ is supported in $\{\vec{p}\in\R^d: |\vec{p}/mc|<1\}$ \cite{Carles_2012}, \emph{i.e.} the initial velocity of the particle has null probability of being greater than $c/\sqrt{2}$. Hence, for particles in a weakly-relativistic regime (which we will define as $|v|<c/\sqrt{2}$), such an approximation has arbitrarily small error in the wavefunction in terms of $L^2$-norms (as well as the energy) if enough terms are taken. Of course, this model is not Lorentz-invariant and neglects relativistic effects that become important at these higher energies such as creation or destruction of particles and  would not be useful to model situations involving such events. For clarity of exposition, we focus on an equation with a  unique high power of the Laplacian, but make a remark at the end of Section~\ref{Main} that explains how to adapt the analysis to the more general case where linear combinations of different powers of the Laplacian appear.

\subsection{Discussion on the Main Result}

Now we briefly describe the approach we take to prove Theorem \ref{thm Main}. The key approach is to consider an approximate solution to \eqref{eq NLS} of the following ansatz
 \begin{align}\label{ansatz}
 v (t,x) : = \varepsilon^p a (\varepsilon^{2n} (t+T), \varepsilon (x + T\xi)) \,e^{ i (x \cdot \xi +  \frac{1}{2n} t)}.
 \end{align}
After substituting this ansatz into \eqref{eq NLS}, it can be shown through a direct calculation that $v(t,x)$ approximately solves \eqref{eq NLS} (with an error term that involves derivatives of $a$ up to order $2n$) if a certain nonlinear transport equation is satisfied by $a$. To complete the proof, it is necessary to establish a theory of well-posedness and stability for \eqref{eq NLS} in the $\mathcal{F}L^1$ norm and to obtain suitable estimates for the error term in the same norm. The restriction $p>n$ is required to demonstrate that the error term is small relative to the amplitude of the solution, which is on the order of $\varepsilon^p$.

Let us remark that in \cite{HMG}, a similar ansatz as in \eqref{ansatz} was considered, which is inspired by the scaling symmetry and Galilean invariance of the linear Schr\"odinger equation. This ansatz provides a solution to the NLS with only one error term to control and yields an exact transport equation for $a$. However, when considering higher order Schr\"odinger equations of the form \eqref{eq NLS}, although the scaling symmetry is still present, Galilean invariance breaks down completely. Therefore,  additional approximate analysis is required to account for this missing symmetry and to control the many error terms arising from the ansatz form.

Also we note that the real-analyticity is needed so that we may use the closure of $\mathcal{F}L^1$ under algebraic operations to control from above the $\mathcal{F}L^1$ norm of $G(|u|^2)$ by the $\mathcal{F}L^1$ norm of $u$. In fact, using Strichartz estimates, it has become standard to establish local well-posedness for nonlinear (regular or higher-order)  Schr\"odinger equations with power-type nonlinearities, assuming initial data of optimal regularity in specific solution spaces, such as $L_t^p L_x^q$ type Strichartz spaces. However, when dealing with a general nonlinearity in our current work, we need to relax the requirement of optimal regularity and work in a more general space, such as $\mathcal{F}L^1$ or $H^s$ ($s > \frac{d}{2}$), relying on their algebraic properties to establish a local theory.
It is important to note that when working in $\mathcal{F}L^1$ or $H^s$, one must exercise caution since pointwise inequalities that hold in physical space may not necessarily hold in Fourier space. Therefore, properties like Lipschitz continuity (which are defined in a pointwise manner in physical space) are not necessarily preserved under the $\mathcal{F}L^1$ or $H^s$ norms. In other words, $|f| \leq C |g|$ for $C>0$ neither implies $\norm{f}_{\mathcal{F}L^1} \leq C '\norm{g}_{\mathcal{F}L^1}$ nor $\norm{f}_{H^s} \leq C' \norm{g}_{H^s}$ for some $C'>0$. 
This occurs because even though $f$ may be pointwise dominated by $g$ in space, it may still be arbitrarily rougher than $g$, hence have arbitrarily large $\mathcal{F}L^1$ norms. This is precisely why we impose the condition of real-analyticity on the nonlinearity, as it provides a certain amount of structure that enables us to handle the nonlinear estimates effectively.

We organize the rest of the paper as such: In Section \ref{prelim} we give some notation and collect some various estimates.  In Section \ref{WPS}, we prove well-posedness and stability of \eqref{eq NLS} in the space $\mathcal{F} L^1$.  In Section \ref{Approx}, we construct the approximate solution $v$ to \eqref{eq NLS} appearing in Theorem \ref{thm Main} and prove estimates for the error term involving $a$ and its derivatives.  Finally, in Section \ref{Main}, we carry out the proof of Theorem \ref{thm Main}.

\subsection*{Acknowledgements}
Both authors would like to thank Jason Murphy and Yang Zhang for very insightful conversations and comments on a preliminary draft of this paper. The authors are very grateful to the anonymous referees for their valuable comments and suggestions. Z. L. was supported by the Undergraduate Research Opportunities Program at the Massachusetts Institute of Technology. X.Y. was partially supported by an AMS-Simons travel grant.

\section{Preliminaries}\label{prelim}

In this section, we define the function spaces that will be used in the rest of this paper.

\subsection{Notations}
We use the usual notation that $A \lesssim  B$ to denote an estimate of the form $A \leq C B$, for some constant $0 < C < \infty$ depending only on the {\it a priori} fixed constants of the problem. We also use $a+$ and $a-$ to denote expressions of the form $a + \sigma$ and $a - \sigma$, for any $0 <  \sigma \ll 1$.

\subsection{Fourier Transforms and Function Spaces}
\begin{defn}[Fourier transform]
Let $\widehat{u} $ or $\mathcal{F} u$ be the Fourier transform of $u$ defined as follows
\begin{align}
\widehat{u} (y) = \mathcal{F} u (y)= \frac{1}{(2\pi)^{d/2}} \int_{\R^d} e^{-i x \cdot y} u(x) \, dx ,
\end{align}
and $\mathcal{F}^{-1} u$ be the inverse Fourier transform
\begin{align}
\mathcal{F}^{-1} v (x) = \frac{1}{(2\pi)^{d/2}} \int_{\R^d} e^{i x \cdot y} v(y) \, dy .
\end{align}
\end{defn}

\begin{defn}[Fourier-Lebesgue space]\label{defn FL}
We recall the Fourier-Lebesgue space $\mathcal{F}L^1$ equipped with the norm
\begin{align}
\norm{u}_{\mathcal{F}L^1 (\R^d)} : = \norm{\widehat{u}}_{L^1  (\R^d)} .
\end{align}
\end{defn}

\begin{prop}[Embedding property]
Using the Hausdorff–Young inequality and Definition \ref{defn FL}, we have
\begin{align}
\norm{u}_{L^{\infty}  (\R^d)} = \norm{\mathcal{F}^{-1} \widehat{u}}_{L^{\infty} (\R^d)} \lesssim \norm{\widehat{u}}_{L^1 (\R^d)} = \norm{u}_{\mathcal{F}L^1 (\R^d)}.
\end{align}

By Cauchy–Schwarz inequality, the integrability of $(1 + \abs{y}^2)^{-\frac{d}{4}-}$ in $L^2 (\R^d)$ and Plancherel theorem, we obtain
\begin{align}\label{FL1}
\norm{u}_{\mathcal{F}L^1 (\R^d)} \lesssim \norm{(1 + \abs{y}^2)^{\frac{d}{4}+} \widehat{u}}_{L^2 (\R^d)} \lesssim \norm{u}_{H^{\frac{d}{2}+} (\R^d)}.
\end{align}
\end{prop}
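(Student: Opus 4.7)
The plan is to verify both inequalities directly from well-known functional-analytic tools, namely the Hausdorff--Young inequality, the Cauchy--Schwarz inequality, and the Plancherel theorem, since the Proposition's statement essentially advertises these as the inputs.

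For the first chain of inequalities, I would begin by recalling that the Fourier inversion formula gives $u = \mathcal{F}^{-1}\widehat{u}$ (at least for Schwartz functions, with the general case handled by density or by interpreting the inequality as a bound valid whenever the right-hand side is finite). Then applying Hausdorff--Young at the endpoint $(L^1,L^\infty)$ to the inverse Fourier transform, we obtain $\|\mathcal{F}^{-1}\widehat{u}\|_{L^\infty} \lesssim \|\widehat{u}\|_{L^1}$, and the right-hand side is exactly $\|u\|_{\mathcal{F}L^1}$ by Definition \ref{defn FL}.

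For the second chain, my plan is to insert the weight $(1+|y|^2)^{d/4+}$ and its reciprocal and apply Cauchy--Schwarz:
\begin{align}
\norm{\widehat{u}}_{L^1(\R^d)} = \int_{\R^d} (1+|y|^2)^{-(\frac{d}{4}+)} \cdot (1+|y|^2)^{\frac{d}{4}+} |\widehat{u}(y)| \, dy \leq \norm{(1+|y|^2)^{-(\frac{d}{4}+)}}_{L^2(\R^d)} \norm{(1+|y|^2)^{\frac{d}{4}+} \widehat{u}}_{L^2(\R^d)} .
\end{align}
The first factor is finite since $(1+|y|^2)^{-(\frac{d}{2}+2\sigma)}$ is integrable on $\R^d$ for any $\sigma>0$ (the exponent strictly exceeds $d/2$). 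The second factor equals $\|\langle\nabla\rangle^{\frac{d}{2}+} u\|_{L^2}$ by Plancherel, which is, by definition, comparable to $\|u\|_{H^{\frac{d}{2}+}}$.

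There is no real obstacle here: the only subtle point is tracking the ``$+$'' notation, i.e., making sure that one fixes a single small $\sigma>0$ so that the integrability of $(1+|y|^2)^{-\frac{d}{4}-\sigma}$ in $L^2$ holds with the same $\sigma$ that appears in the Sobolev exponent $\tfrac{d}{2}+2\sigma$ on the right-hand side. Both inequalities then assemble into the claimed estimate.
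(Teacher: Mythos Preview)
Your proposal is correct and follows exactly the route the paper indicates: Hausdorff--Young at the $(L^1,L^\infty)$ endpoint for the first inequality, and the weighted Cauchy--Schwarz splitting together with the $L^2$-integrability of $(1+|y|^2)^{-\frac{d}{4}-}$ and Plancherel for the second. There is nothing to add; the paper's ``proof'' is already embedded in the statement itself, and you have simply written out the details.
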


\begin{prop}[Algebra property]\label{prop Algebra}
Using Definition \ref{defn FL} and Young's inequality, we write 
\begin{align}
\norm{u v}_{\mathcal{F}L^1 (\R^d)} = \norm{\mathcal{F} [uv]}_{L^1 (\R^d)} = \norm{\widehat{u} * \widehat{v}}_{L^1 (\R^d)} \leq \norm{\widehat{u}}_{L^1 (\R^d)} \norm{\widehat{v}}_{L^1 (\R^d)} = \norm{u }_{\mathcal{F}L^1 (\R^d)} \norm{v}_{\mathcal{F}L^1 (\R^d)}  .
\end{align}
Hence we conclude that $\mathcal{F} L^1$ is an algebra.
\end{prop}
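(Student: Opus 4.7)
The plan is to reduce the algebra estimate for $\mathcal{F}L^1$ to Young's convolution inequality $\norm{f*g}_{L^1(\R^d)} \leq \norm{f}_{L^1(\R^d)}\norm{g}_{L^1(\R^d)}$ via the classical identity relating Fourier transforms of products to convolutions of Fourier transforms. First, by Definition~\ref{defn FL}, the quantity to control is $\norm{uv}_{\mathcal{F}L^1(\R^d)} = \norm{\widehat{uv}}_{L^1(\R^d)}$, so it suffices to bound the $L^1$-norm of the Fourier transform of the pointwise product.

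Second, I would invoke the convolution theorem: for sufficiently nice $u$ and $v$, one has $\widehat{uv} = c\,\widehat{u} * \widehat{v}$, where $c$ is a normalization constant (here $c = (2\pi)^{-d/2}$) depending on the Fourier transform convention fixed in the preceding definition. This identity follows from a direct Fubini computation once Fourier inversion is inserted for $u$ and $v$ in the definition of $\widehat{uv}$. Combining this with Young's convolution inequality (taking $p=q=r=1$) then gives
\begin{align}
\norm{\widehat{uv}}_{L^1(\R^d)} = c\,\norm{\widehat{u}*\widehat{v}}_{L^1(\R^d)} \leq c\,\norm{\widehat{u}}_{L^1(\R^d)}\norm{\widehat{v}}_{L^1(\R^d)},
\end{align}
which, upon rewriting in terms of $\mathcal{F}L^1$-norms, is exactly the claimed algebra estimate (possibly up to the dimensional constant, which is harmless since the paper systematically absorbs such constants into $\lesssim$).

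The only mildly delicate point is justifying the convolution identity on all of $\mathcal{F}L^1$ rather than merely on the Schwartz class, since $u, v \in \mathcal{F}L^1$ need not themselves belong to $L^1$ or $L^2$. This is resolved using the embedding $\mathcal{F}L^1 \hookrightarrow L^\infty$ noted just above, which guarantees that $u$ and $v$ are bounded continuous functions and hence $uv$ is bounded, so $\widehat{uv}$ is well-defined as a tempered distribution. One then approximates $\widehat{u}$ and $\widehat{v}$ in $L^1$ by Schwartz functions (for which the identity is classical), and passes to the limit: the right-hand side converges in $L^1$ by Young's inequality, while the left-hand side converges as a tempered distribution, forcing equality. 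With the identity secured, the estimate above completes the proof that $\mathcal{F}L^1$ is a Banach algebra under pointwise multiplication.
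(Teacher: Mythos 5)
Your proof is correct and follows essentially the same route as the paper: reduce to the convolution identity $\widehat{uv}=c\,\widehat{u}*\widehat{v}$ and apply Young's inequality with $p=q=r=1$. The extra care you take with the normalization constant $(2\pi)^{-d/2}$ and with extending the convolution identity from Schwartz functions to all of $\mathcal{F}L^1$ by density is a refinement the paper glosses over, but it does not change the argument.
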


\section{Well-posedness and Stability in $\mathcal{F}L^1$}\label{WPS}
In this section, we present a local well-posedness argument (Proposition \ref{prop WP}) and a stability result (Proposition \ref{prop Stab}) (see also \cite{Carles_2010}).
\begin{prop}[Well-posedness in $\mathcal{F}L^1$]\label{prop WP}

Let $d$ be any dimension, $ \beta  \in \mathcal{F}L^1 (\R^d) $, and $T>0$. There exists $\delta_0 = \delta_0 (T , \norm{\beta}_{\mathcal{F}L^1 (\R^d)}, G) >0$ such that for any $u_0 \in \mathcal{F}L^1 (\R^d)$ with
\begin{align}
\norm{u_0}_{\mathcal{F}L^1  (\R^d)} < \delta_0 , 
\end{align}
there exists a unique solution $u \in L_t^{\infty} ([-T, T] ; \mathcal{F}L^1(\R^d) )$ to \eqref{eq NLS} satisfying
\begin{align}\label{WP1} 
\norm{u}_{L_t^{\infty} \mathcal{F}L^1 ([-T, T]  \times \R^d)} \leq 2 \norm{u_0}_{\mathcal{F}L^1  (\R^d)}
\end{align}

\end{prop}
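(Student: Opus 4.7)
The plan is to run a standard contraction mapping argument on the Duhamel integral formulation. Setting $U(t) = e^{it(-\Delta)^n/(2n)}$, the equation is equivalent to
\begin{align}
u(t) = U(t+T) u_0 - i \int_{-T}^{t} U(t-s)\,\bigl[\beta(x)\, G(|u|^2) u\bigr](s)\, ds =: \Phi(u)(t).
\end{align}
Since $U(t)$ is a Fourier multiplier with symbol $e^{it|\xi|^{2n}/(2n)}$ of unit modulus, it acts as an isometry on $\mathcal{F}L^1(\R^d)$. Thus the linear part contributes $\|u_0\|_{\mathcal{F}L^1}$ to the norm, and the whole analysis is reduced to estimating the Duhamel integrand in $\mathcal{F}L^1$ uniformly in time on $[-T,T]$.

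The next step is to control the nonlinearity via the algebra property (Proposition~\ref{prop Algebra}). Since $\|\bar u\|_{\mathcal{F}L^1} = \|u\|_{\mathcal{F}L^1}$, we get $\||u|^2\|_{\mathcal{F}L^1} \le \|u\|_{\mathcal{F}L^1}^2$, and more generally $\||u|^{2k}\|_{\mathcal{F}L^1} \le \|u\|_{\mathcal{F}L^1}^{2k}$. Expanding $G$ as a power series, this yields
\begin{align}
\|G(|u|^2)\|_{\mathcal{F}L^1} \le \sum_{k\ge 1}\frac{|a_k|}{k!}\,\|u\|_{\mathcal{F}L^1}^{2k} =: \widetilde G\bigl(\|u\|_{\mathcal{F}L^1}^2\bigr),
\end{align}
valid as long as $\|u\|_{\mathcal{F}L^1}^2 < R$, with $\widetilde G$ continuous and vanishing at the origin. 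Combined with $\|\beta G(|u|^2) u\|_{\mathcal{F}L^1} \le \|\beta\|_{\mathcal{F}L^1}\widetilde G(\|u\|_{\mathcal{F}L^1}^2)\|u\|_{\mathcal{F}L^1}$ and integrating in time over $[-T,T]$, we obtain
\begin{align}
\|\Phi(u)\|_{L^\infty_t\mathcal{F}L^1} \le \|u_0\|_{\mathcal{F}L^1} + 2T\,\|\beta\|_{\mathcal{F}L^1}\,\widetilde G\bigl(\|u\|_{L^\infty_t\mathcal{F}L^1}^2\bigr)\,\|u\|_{L^\infty_t\mathcal{F}L^1}.
\end{align}
Choosing $\delta_0$ so that $2T\|\beta\|_{\mathcal{F}L^1}\widetilde G(4\delta_0^2) \le \tfrac12$ forces $\Phi$ to map the ball $B = \{u : \|u\|_{L^\infty_t\mathcal{F}L^1}\le 2\|u_0\|_{\mathcal{F}L^1}\}$ into itself, which will yield the a priori bound \eqref{WP1}.

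For the contraction property I would write $G(|u|^2)u - G(|v|^2)v = G(|u|^2)(u-v) + \bigl(G(|u|^2)-G(|v|^2)\bigr) v$ and handle the difference term by
\begin{align}
|u|^{2k}-|v|^{2k} = (|u|^2-|v|^2)\sum_{j=0}^{k-1}|u|^{2j}|v|^{2(k-1-j)}, \qquad |u|^2-|v|^2 = u(\overline{u-v}) + \overline{v}(u-v),
\end{align}
so that the algebra property gives $\bigl\||u|^{2k}-|v|^{2k}\bigr\|_{\mathcal{F}L^1} \le k\,M^{2k-1}\,(\|u\|_{\mathcal{F}L^1}+\|v\|_{\mathcal{F}L^1})\|u-v\|_{\mathcal{F}L^1}$, where $M = \max(\|u\|_{\mathcal{F}L^1},\|v\|_{\mathcal{F}L^1})$. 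Summing termwise against $|a_k|/k!$ produces a convergent series (again for $M^2 < R$) and a Lipschitz bound for $u \mapsto G(|u|^2)u$ on $B$ with a constant that tends to $0$ as $\delta_0 \to 0$. Shrinking $\delta_0$ further if necessary gives $\|\Phi(u)-\Phi(v)\|_{L^\infty_t\mathcal{F}L^1} \le \tfrac12\|u-v\|_{L^\infty_t\mathcal{F}L^1}$, so Banach's fixed point theorem produces the unique solution in $B$, and standard uniqueness-in-the-class arguments extend uniqueness to all of $L^\infty_t\mathcal{F}L^1$.

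The main obstacle, as flagged in the paper's discussion, is the absence of a pointwise Lipschitz bound transferring to $\mathcal{F}L^1$: one cannot just write $|G(|u|^2)-G(|v|^2)| \lesssim |u-v|(\ldots)$ and take $\mathcal{F}L^1$ norms. The real-analyticity of $G$ is precisely what lets us expand everything into a convergent power series and work termwise using only the algebra property, converting a pointwise-type estimate into a genuine $\mathcal{F}L^1$ estimate. All other steps are routine Duhamel-contraction bookkeeping.
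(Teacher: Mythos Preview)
Your proposal is correct and follows essentially the same route as the paper: Duhamel formulation, isometry of $e^{it(-\Delta)^n/(2n)}$ on $\mathcal{F}L^1$, termwise control of $G(|u|^2)$ via the power series and the algebra property, and the same telescoping of $|u|^{2k}-|v|^{2k}$ for the contraction step. Apart from cosmetic differences (your majorant $\widetilde G$, and a harmless slip in the exponent $2k-1$ which should be $2k-2$), the arguments coincide.
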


\begin{proof}[Proof of Proposition \ref{prop WP}]
We will establish a solution to the Duhamel formula for \eqref{eq NLS}, 
\begin{align}
    u(t)=e^{i(t+T) (-\Delta)^{n}/2n}u_0-i\int_{-T}^t e^{i(t-s) (-\Delta)^{n}/2n}  \beta (x)[G(|u|^2) u](s)\,ds.
\end{align}

Fix $T>0$ and $\beta \in \mathcal{F}L^1$. Let $\delta_0>0$ be determined later and let $u_0\in\mathcal{F}L^1$ satisfy
\begin{align}
    \norm{u_0}_{\mathcal{F}L^1  (\R^d)}<\delta_0.
\end{align}
Define the complete metric space $(B,d)$ by
\begin{align}
B=\{u\in L^\infty([-T,T]; \mathcal{F}L^1(\mathbb{R}^d)): \norm{u}_{L^\infty_t \mathcal{F}L^1([-T,T] \times \mathbb{R}^d)} \le 2\norm{u_0}_{\mathcal{F}L^1 (\R^d)}\}
\end{align}
and 
\begin{align}
    d(u,v)= \norm{u -v}_{L^\infty_t \mathcal{F}L^1([-T,T] \times \mathbb{R}^d)}.
\end{align}
We also define
\begin{align}
    \Psi[u](t):=e^{i(t+T) (-\Delta)^{n}/2n}u_0-i\int_{-T}^t e^{i(t-s) (-\Delta)^{n}/2n}[ \beta (x) G(|u(s)|^2) u(s)]\,ds.
\end{align}
We will show that for small enough $\delta_0$, $\Psi$ is a contraction on $B$. Let us first note that $x\mapsto G(x^2)$ is real-analytic with radius of convergence $R^{1/2}>0$. We notice that we also have $\norm{u}_{L_t^\infty L_x^\infty} < 2\delta_0$. Hence, we have that for $\delta_0 < \frac{R^{1/2}}{2}$,
\begin{align}
    G(|u|^2)(x)=\sum_{k\ge 1} \frac{a_k}{k!}|u|^{2k}(x)\quad \text{for any } x\in\R^d. 
\end{align}
Hence, taking the $\mathcal{F}L^1$ norm and utilizing the triangle inequality and the algebra property, we have for all $u\in B$, 
\begin{align}
    \norm{G(|u|^2)}_{\mathcal{F}L^1} &\le \sum_{k\ge 1} \frac{\abs{a_k}}{k!} \norm{u}_{\mathcal{F}L^1}^{2k} \le \sum_{k\ge 1} \frac{\abs{a_k}}{k!} 2^{2k} \delta_0^{2k} \\
    &=\delta_0^2 \sum_{k\ge 1} \frac{\abs{a_k}}{k!} 2^{2k} \delta_0^{2k-2} \le  \delta_0^2 \sum_{k\ge 1} \frac{\abs{a_k}}{k!} 2^{2k} \left(\frac{R^{1/2}}{4}\right)^{2k-2} \\ 
    &=\delta_0^2 \frac{16}{R} \sum_{k\ge 1} \frac{\abs{a_k}}{k!} 2^{-2k} R^{k} =\delta_0^2 \frac{16}{R} \sum_{k\ge 1} \frac{\abs{a_k}}{k!}  \left(\frac{R}{4}\right)^k
\end{align}

provided we take $\delta_0 <  \frac{R^{1/2}}{4}$ with $R>0$ the radius of real-analyticity of $G$.

Now, let $u\in B$. For each $t\in[-T,T]$, we use $e^{it (-\Delta)^n/2n}=\mathcal{F}^{-1}e^{it|y|^{2n}/2n}\mathcal{F}$, the triangle inequality, $\norm{u}_{L_t^\infty \mathcal{F}L^1} \le 2 \norm{u_0}_{\mathcal{F}L^1}<2\delta_0$ and the algebra property (Proposition \ref{prop Algebra}) to estimate
\begin{align}
\norm{\mathcal{F}[\Psi u](t)}_{L^1  (\R^d)} &\le \norm{\widehat{u}_0}_{L^1 (\R^d)} + \norm{\int_{-T}^t e^{i(t-s)|y|^{2n}/{2n}}\mathcal{F}[ \beta (x) G(|u(s)|^2) u(s)]\,ds}_{L^1 (\R^d) } \\ 
&\le \norm{u_0}_{\mathcal{F}L^1 (\R^d)} + \int_{-T}^t \norm{\beta}_{\mathcal{F}L^1 (\R^d)}\norm{G(|u(s)|^2)}_{\mathcal{F}L^1 (\R^d)}\norm{u(s)}_{\mathcal{F}L^1 (\R^d)} \, ds \\
&\le \norm{u_0}_{\mathcal{F}L^1 (\R^d)} + 32\delta_0^2\,T  \,\frac{1}{R} \, \norm{\beta}_{\mathcal{F}L^1 (\R^d)}\norm{u_0}_{\mathcal{F}L^1 (\R^d)}\sum_{k\ge 1} \frac{\abs{a_k}}{k!}  \left(\frac{R}{4}\right)^k \\
&\le 2 \norm{u_0}_{\mathcal{F}L^1 (\R^d)}
\end{align}
provided
$$\delta_0\le \left(\frac{R}{32T\,\sum_{k\ge 1} \frac{\abs{a_k}}{k!}  \left(\frac{R}{4}\right)^k\,\norm{\beta}_{\mathcal{F}L^1 (\R^d)}}\right)^{1/2}.$$
Taking the supremum over $t\in[-T, T]$, we see that $\Psi$ maps $B$ to itself.
Next, we let $u, v \in B$ and write
\begin{align}
    u\,G(|u|^2)-v\,G(|v^2|)=\sum_{k\ge 1}\frac{a_k}{k!}\left(u\,|u|^{2k}-v\,|v|^{2k}\right).
\end{align}
Now notice that the expression $u\,|u|^{2k}-v\,|v|^{2k}$ can be written as 
\begin{align}
    u\,|u|^{2k}-v\,|v|^{2k} &= (u-v)|u|^{2k} + v\left(|u|^{2k}-|v|^{2k}\right) \\ 
    &= (u-v)|u|^{2k} + v\left(|u|^2-|v|^2\right)\sum_{j=1}^k |u|^{2k-2j} |v|^{2j-2} \\
    &= (u-v)|u|^{2k} + v\left[(u-v)\overline{u} +v(\overline{u-v})\right]\sum_{j=1}^k |u|^{2k-2j} |v|^{2j-2}.
\end{align}

Hence, taking the $\mathcal{F}L^1$ norm and utilizing the algebraic property of the Fourier-Lebesgue norm, for  $\norm{u}_{\mathcal{F}L^1}, \norm{v}_{\mathcal{F}L^1} <2\delta_0$,
\begin{align}
    \norm{u\,|u|^{2k}-v\,|v|^{2k}}_{\mathcal{F}L^1(\R^d)} &\le \norm{u-v}_{\mathcal{F}L^1(\R^d)} 2^{2k}\left(\delta_0^{2k}+2\delta_0^2\sum_{j=1}^k \delta_0^{2k-2} \right) \\ 
    &= \norm{u-v}_{\mathcal{F}L^1(\R^d)}2^{2k}\delta_0^{2k} \left(1+2k\right)
\end{align}

Hence, utilizing the analyticity of $G$ near the origin
\begin{align}
    \bigg\Vert u\,G(|u|^2)-v\,G(|v^2|)\bigg\Vert_{\mathcal{F} L^1 (\R^d)} & \le \sum_{k\ge 1}\frac{|a_k|}{k!} (2k+1) 2^{2k} \delta_0^{2k}\big\Vert u-v\big\Vert_{\mathcal{F}L^1 (\R^d)} \\
    &\le  \delta_0^2 \big\Vert u-v\big\Vert_{\mathcal{F}L^1 (\R^d)} \sum_{k\ge 1}\frac{(2k+1)\,|a_k|}{k!} 2^{2k} \delta_0^{2k-2} \\
    &\le  \delta_0^2 \big\Vert u-v\big\Vert_{\mathcal{F}L^1 (\R^d)} \sum_{k\ge 1}\frac{(2k+1)\,|a_k|}{k!} 2^{2k} \left(\frac{R^{1/2}}{4}\right)^{2k-2} \\
     &\le \frac{16}{R} \delta_0^2 \big\Vert u-v\big\Vert_{\mathcal{F}L^1 (\R^d)} \sum_{k\ge 1} \frac{(2k+1)\,|a_k|}{k!}  \left(\frac{R}{4}\right)^k 
\end{align}
provided we take $\delta_0 \le \frac{R^{1/2}}{4}$, where $R$ is such that $G(x)$ is real-analytic for $|x|<R$,   despite the factor of $2k+1$ because of the fact that $\lim_{k\to\infty} |2k+1|^{\frac{1}{k}}=1$ applied to the standard \emph{Cauchy root test}.

Hence, we have
\begin{align}
    \norm{\mathcal{F}[\Psi u](t) - \mathcal{F}[\Psi v](t)}_{L^1 (\R^d)} &\le \int_{-T}^t \norm{\beta}_{\mathcal{F}L^1 (\R^d)} \norm{G(|u|^2) u - G(|v|^2) v}_{\mathcal{F}L^1 (\R^d)}\,ds \\
    &\le  \frac{32T}{R} \delta_0^2 \big\Vert u-v\big\Vert_{\mathcal{F}L^1 (\R^d)} \sum_{k\ge 1} \frac{(2k+1)\,|a_k|}{k!}  \left(\frac{R}{4}\right)^k\norm{\beta}_{\mathcal{F}L^1 (\R^d) } \\
    &\le \frac{1}{2}\norm{u-v}_{L^\infty_t \mathcal{F}L^1([-T,T] \times \mathbb{R}^d)}
\end{align}
provided
$$\delta_0\le \left(\frac{R}{64T\norm{\beta}_{\mathcal{F}L^1}\sum_{k\ge 1} \frac{(2k+1)\,|a_k|}{k!}  \left(\frac{R}{4}\right)^k}\right)^{1/2}.$$

 Therefore, to justify all our arguments, we need to take 
$$\delta_0 < \frac{ R^{1/2}}{4}\min \left(1, \frac{1}{2\sqrt{T\norm{\beta}_{\mathcal{F}L^1}\sum_{k\ge 1} \frac{(2k+1)\,|a_k|}{k!}  \left(\frac{R}{4}\right)^k}}\right).$$

Taking the supremum over $t$, we see that $\Psi$ is a contraction mapping, and thus by the Banach fixed point theorem, has a unique fixed point $u\in B$, yielding the desired solution to \eqref{eq NLS}. 
\end{proof}

\begin{prop}[Stability in $\mathcal{F}L^1$]\label{prop Stab}

Let $d$ be any dimension, $\beta \in \mathcal{F}L^1 (\R^d)$, and $T>0$. Let $\delta_0 >0$ be as in Proposition \ref{prop WP}.

If $v : [-T, T] \times \R^d \to \C$ satisfies
\begin{align}
\norm{v (-T,x)}_{\mathcal{F}L^1 (\R^d)} < \delta_0  \label{stability1}, \\
\norm{v}_{L_t^{\infty} \mathcal{F}L^1 ([-T,T] \times \R^d)} \lesssim \delta_0 \label{stability2}, 
\end{align}
and
\begin{align}
\norm{\int_{-T}^t e^{i(t-s) (-\Delta)^{n}/2n} [(i \partial_t + \frac{1}{2n}(-\Delta)^n)v -  \beta (x) G(\abs{v}^2) v] (s) \, ds}_{L_t^{\infty}\mathcal{F}L^1 ([-T,T] \times \R^d)} = \delta \label{eq2},
\end{align}
then the solution $u$ to \eqref{eq NLS} with $u(-T,x) = v(-T ,x) $ exists on $[-T,T]$ and satisfies 
\begin{align}
\norm{u - v}_{L_t^{\infty} \mathcal{F}L^1 ([-T,T] \times \R^d)} \lesssim \delta \label{stability3}. 
\end{align}
\end{prop}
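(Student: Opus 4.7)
The plan is to compare the Duhamel formulas for $u$ and $v$ and close a contraction/Gronwall-type estimate on the difference $w := u - v$ in $L_t^\infty \mathcal{F}L^1$, reusing the nonlinear Lipschitz bound already established in the proof of Proposition~\ref{prop WP}.

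First, since $\|u(-T,x)\|_{\mathcal{F}L^1} = \|v(-T,x)\|_{\mathcal{F}L^1} < \delta_0$ by \eqref{stability1}, Proposition~\ref{prop WP} produces a unique solution $u \in L_t^\infty([-T,T]; \mathcal{F}L^1)$ to \eqref{eq NLS} with $\|u\|_{L_t^\infty \mathcal{F}L^1} \le 2\|v(-T)\|_{\mathcal{F}L^1} \lesssim \delta_0$. In particular, both $u$ and $v$ are of size $O(\delta_0)$ in the $L_t^\infty \mathcal{F}L^1$ norm on $[-T,T]$ by \eqref{stability2}. Writing Duhamel for $u$ and rearranging the equation for $v$ (treating $(i\partial_t + \tfrac{1}{2n}(-\Delta)^n)v - \beta(x) G(|v|^2) v$ as an inhomogeneity), we obtain
\begin{align}
w(t) = -i \int_{-T}^t e^{i(t-s)(-\Delta)^n/2n}\, \beta(x)\bigl[G(|u|^2)u - G(|v|^2)v\bigr](s)\, ds \;+\; E(t),
\end{align}
where $E(t)$ is (up to a sign) exactly the quantity controlled in \eqref{eq2}, so that $\|E\|_{L_t^\infty \mathcal{F}L^1} = \delta$. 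Note $w(-T) = 0$.

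Next, applying the $\mathcal{F}L^1$ algebra property (Proposition~\ref{prop Algebra}) together with the pointwise identity
\begin{align}
u G(|u|^2) - v G(|v|^2) = \sum_{k\ge 1} \frac{a_k}{k!}\bigl(u|u|^{2k} - v|v|^{2k}\bigr)
\end{align}
expanded exactly as in the contraction step of Proposition~\ref{prop WP}, and using that both $u$ and $v$ are controlled by a constant multiple of $\delta_0$, we get the Lipschitz bound
\begin{align}
\norm{\beta\,[G(|u|^2)u - G(|v|^2)v](s)}_{\mathcal{F}L^1} \le C(R,G,\beta)\, \delta_0^2\, \norm{w(s)}_{\mathcal{F}L^1}.
\end{align}
Taking the $\mathcal{F}L^1$ norm in $x$ of the Duhamel expression for $w$ and using the isometry of $e^{it(-\Delta)^n/2n}$ on $\mathcal{F}L^1$, we deduce
\begin{align}
\norm{w(t)}_{\mathcal{F}L^1} \le C(R,G,\beta)\, \delta_0^2 \int_{-T}^{t} \norm{w(s)}_{\mathcal{F}L^1}\, ds + \delta.
\end{align}

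Finally, by possibly shrinking $\delta_0$ further (in a manner depending only on $T$, $\|\beta\|_{\mathcal{F}L^1}$, and $G$, so that the fixed choice in Proposition~\ref{prop WP} can be retained), we may arrange $2T C(R,G,\beta) \delta_0^2 \le \tfrac12$. Taking the supremum over $t\in[-T,T]$ then gives $\|w\|_{L_t^\infty \mathcal{F}L^1} \le \tfrac12\|w\|_{L_t^\infty \mathcal{F}L^1} + \delta$, which absorbs to $\|w\|_{L_t^\infty \mathcal{F}L^1} \le 2\delta$, establishing \eqref{stability3}; Gronwall's inequality would give the same conclusion. The main delicate point is verifying that the Lipschitz constant for $u\,G(|u|^2)$ on $\mathcal{F}L^1$-balls of radius $O(\delta_0)$ is finite and well-controlled --- this is precisely where real-analyticity of $G$ on $[0,R)$ is essential, but the series manipulation needed is identical to the one appearing in Proposition~\ref{prop WP}, so no new analytic input is required.
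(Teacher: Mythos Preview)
Your proposal is correct and follows essentially the same approach as the paper: both write the Duhamel formula for the difference $u-v$, invoke the $\mathcal{F}L^1$ Lipschitz estimate for $u\,G(|u|^2)-v\,G(|v|^2)$ already established in the contraction step of Proposition~\ref{prop WP}, and then absorb the resulting small multiple of $\|u-v\|_{L_t^\infty\mathcal{F}L^1}$ into the left-hand side using the smallness of $\delta_0$. Your remark that Gronwall would also close the estimate is fine but unnecessary, and your caveat about ``possibly shrinking $\delta_0$'' is already accounted for by the choice of $\delta_0$ made in Proposition~\ref{prop WP}.
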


\begin{proof}[Proof of Proposition \ref{prop Stab}]

We fix $T>0$ and $\beta \in \mathcal{F} L^{1}(\R^d)$. Assume that $v:[-T, T] \times \mathbb{R}^{d} \rightarrow \mathbb{C}$ satisfies \eqref{stability1}, \eqref{stability2}.  We let $u:[-T, T] \times \mathbb{R}^{d} \rightarrow \mathbb{C}$ be the solution to \eqref{eq NLS} with $\left.u\right|_{t=-T}=\left.v\right|_{t=-T}$ satisfying \eqref{WP1}, whose existence is guaranteed by Proposition \ref{prop WP} and \eqref{stability1}.

We now observe that the difference $u-v$ satisfies the Duhamel formula
\begin{align}
u(t)-v(t)=-i \int_{-T}^{t} e^{i(t-s) (-\Delta)^{n} / 2n}\left(\beta(x)\left[G(|u|^{2}) u-G(|v|^{2}) v\right](s)-E(s)\right) d s,
\end{align}
where
\begin{align}
E(t):=\left(i \partial_{t}+\frac{1}{2n} (-\Delta)^{n} \right) v- \beta(x) G(|v|^{2}) v .
\end{align}

We now estimate as we did in the proof of Proposition \ref{prop WP}. Using \eqref{eq NLS}, \eqref{eq2}, \eqref{stability2}, and \eqref{WP1}, we have
\begin{align}
\norm{u(t) - v(t)}_{\mathcal{F}L^1 (\R^d)} 
& \le \frac{32T}{R} \delta_0^2 \big\Vert u-v\big\Vert_{\mathcal{F}L^1 (\R^d)} \sum_{k\ge 1} \frac{(2k+1)\,|a_k|}{k!}  \left(\frac{R}{4}\right)^k\norm{\beta}_{\mathcal{F}L^1 (\R^d) }+\delta.
\end{align}
Taking the supremum over $t \in[-T, T]$ and remembering that $\delta_{0}$ satisfies  
$$\delta_0\le \left(\frac{R}{64T\norm{\beta}_{\mathcal{F}L^1}\sum_{k\ge 1} \frac{(2k+1)\,|a_k|}{k!}  \left(\frac{R}{4}\right)^k}\right)^{1/2},$$
we deduce that
\begin{align}
\norm{u-v}_{L_{t}^{\infty} \mathcal{F}L^{1}\left([-T, T] \times \mathbb{R}^{d}\right)} \leq \frac{1}{2} \norm{u-v}_{L_{t}^{\infty} \mathcal{F}L^{1}\left([-T, T] \times \mathbb{R}^{d}\right)}+\delta,
\end{align}
which yields \eqref{stability3}, as desired.

\end{proof}

\section{Approximation Solutions}\label{Approx}
In this section, we consider an approximate solution based on work done in \cite{Carles_2010} to \eqref{eq NLS} and show the estimates needed to apply the stability result (Proposition \ref{prop Stab}).
\begin{prop}\label{prop Appx}
Let $d$ be any dimension. Fix $T>0$, $0 < \varepsilon \ll 1$, and $\xi \in \R^d$ with $\abs{\xi} =1$. Let $a_0 \in \mathcal{F}L^1(\mathbb{R}^d)$ and define
\begin{align}
a (t,x) = a_0 (x+ \varepsilon^{1-2n} t \xi) \,\exp\left(-i \varepsilon^{-2n} G(\varepsilon^{2p} \abs{a_0(x + \varepsilon^{1-2n} t \xi) }^2) \int_0^t \beta (\varepsilon^{-1} x + \varepsilon^{-2n} (t-s)\xi -T\xi)  \, ds\right) .
\end{align}
and
\begin{align}
v (t,x) : = \varepsilon^p a (\varepsilon^{2n} (t+T), \varepsilon (x + T\xi)) \,e^{ i (x \cdot \xi +  \frac{1}{2n} t)} .
\end{align}
Then we have the following 
\begin{enumerate}
\item 
The function $v$ satisfies the initial condition
\begin{align}
v(-T, x) = u_0 (x) := \varepsilon^p a_0 (\varepsilon (x+ T)) \,e^{ i (x \cdot \xi +  \frac{1}{2n} t)} ;
\end{align}

\item
We have the following identity
\begin{align}
(i \partial_t + \frac{1}{2n} (-\Delta)^n) v -  \beta (x) G(\abs{v}^2) v & = \varepsilon^{p+2n} e^{i \Phi} \sq{ i \partial_t a - i \varepsilon^{1-2n} \xi \cdot \nabla a  -  \varepsilon^{-2n}  \beta(x) G(\varepsilon^{2p} \abs{a}^2) a } \\
& \quad + \varepsilon^{p+2n} e^{i \Phi} \sq{\sum_{j=2}^{2n} \mathcal{O} (\varepsilon^{j-2n}  e^{i\Phi} D_x^j a ) } ,
\end{align}
where $D_x^j = \sum\limits_{ \substack{ (j_1, j_2 , \ldots , j_d) \\ j_1 + j_2 + \cdots + j_d = j}} \partial_{x_1}^{j_1}  \partial_{x_2}^{j_2}  \cdots \partial_{x_d}^{j_d} $;

\item
The function $a$ satisfies  the following estimates
\begin{align}
\norm{a}_{\mathcal{F}L^1} &  \lesssim \norm{a}_{H_x^{\frac{d}{2}+} (\R^d)}   \lesssim  \max \{ 1 , \varepsilon^{2p - 2n - }\} ,\\
\norm{D_x^j a}_{\mathcal{F}L^1}  &  \lesssim  \max \{ 1 , \varepsilon^{2p - 2n -j - }\} .
\end{align}
\end{enumerate}
\end{prop}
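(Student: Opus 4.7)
The plan is to verify the three claims in order, using the abbreviations $\tau = \varepsilon^{2n}(t+T)$, $y = \varepsilon(x + T\xi)$, $\Phi = x\cdot\xi + \frac{t}{2n}$, and $\tilde{a}(t,x) := a(\tau, y)$, so that $v = \varepsilon^p \tilde{a}\, e^{i\Phi}$.

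Part (1) is immediate: setting $t = -T$ forces $\tau = 0$, so the integral in the exponent of $a$ runs over an empty interval, the exponential factor collapses to $1$, and the remaining $\varepsilon^p a_0(\varepsilon(x+T\xi))\,e^{i(x\cdot\xi - T/(2n))}$ matches the stated initial data. For part (2), the time derivative gives $i\partial_t v = i\varepsilon^{p+2n}(\partial_\tau a)e^{i\Phi} - \frac{1}{2n}\varepsilon^p \tilde{a}\, e^{i\Phi}$. For the polyharmonic term, I would use the conjugation identity $e^{-i\Phi}(-i\nabla_x)(f\,e^{i\Phi}) = (\xi - i\nabla_x)f$ with $f = \tilde{a}$, together with $\partial_{x_k}\tilde{a} = \varepsilon\,(\partial_{y_k}a)$, to obtain
\begin{equation*}
e^{-i\Phi}(-\Delta)^n(\tilde{a}\, e^{i\Phi}) = (1 - 2i\varepsilon\,\xi\cdot\nabla_y - \varepsilon^2\Delta_y)^n a .
\end{equation*}
Binomial expansion yields a constant term $a$ (which cancels the $-\frac{1}{2n}\varepsilon^p\tilde{a}$ coming from $i\partial_t v$), a linear term $-2in\varepsilon\,\xi\cdot\nabla_y a$, and higher-order contributions of the form $\varepsilon^j\mathcal{O}(D_y^j a)$ for $j = 2, \ldots, 2n$. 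Since $|v|^2 = \varepsilon^{2p}|\tilde{a}|^2$, the nonlinear term becomes $\beta(x) G(|v|^2)v = \varepsilon^p \beta(x) G(\varepsilon^{2p}|\tilde{a}|^2)\tilde{a}\, e^{i\Phi}$. Subtracting this from the two preceding contributions and factoring out $\varepsilon^{p+2n}e^{i\Phi}$ gives the claimed identity.

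For part (3), the Sobolev embedding \eqref{FL1} reduces the $\mathcal{F}L^1$-bounds to $H^{d/2+}$-bounds. Writing $a = a_0(x + \varepsilon^{1-2n}t\xi)\,e^{i\psi}$, the key scalings for the phase are that every spatial derivative falling on $\beta(\varepsilon^{-1}x + \cdots)$ costs $\varepsilon^{-1}$, while analyticity of $G$ near $0$ gives $G(\varepsilon^{2p}|a_0|^2) = \mathcal{O}(\varepsilon^{2p})$. Combined with the prefactor $\varepsilon^{-2n}$, these yield $\norm{D_x^j\psi}_{L^\infty} \lesssim \varepsilon^{2p-2n-j}$. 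A Faà di Bruno expansion of $\partial_x^j e^{i\psi}$ then shows, using $p > n$, that the dominant Bell-polynomial contribution is $i\partial_x^j \psi\cdot e^{i\psi}$, so $\norm{D_x^j e^{i\psi}}_{L^\infty} \lesssim \max\{1, \varepsilon^{2p-2n-j}\}$. Multiplying by the Schwartz factor $a_0$ and promoting the $L^\infty$ estimates to $L^2$-based $H^{d/2+}$ bounds (using the Schwartz decay of $a_0$ and the compact support of $\beta$) gives the claimed estimates, with the infinitesimal ``$-$'' absorbing the fractional $d/2+$ derivative.

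The main obstacle I anticipate is the bookkeeping in part (3): one must verify that among all terms in the Bell-polynomial expansion of $\partial_x^j e^{i\psi}$, the single $j$-th derivative $\partial_x^j\psi$ dominates every product of lower-order derivatives uniformly under the hypothesis $p > n$, and then transfer the $L^\infty$ bounds to fractional Sobolev norms while losing no more than the infinitesimal ``$-$''. The chain-rule expansion grows combinatorially in $j$, and the interplay of the various $\varepsilon$-scalings with the analyticity expansion of $G$ must be handled carefully to match the stated exponents.
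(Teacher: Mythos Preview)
Your proposal is correct and follows essentially the same approach as the paper's own proof: direct verification of the initial condition, a derivative computation for the identity in part (2) (your conjugation identity $e^{-i\Phi}(-i\nabla_x)e^{i\Phi}=\xi-i\nabla_x$ is just a cleaner packaging of the paper's term-by-term expansion), and a case-by-case analysis of where derivatives fall for the Sobolev bounds in part (3). The only notable difference is that the paper additionally \emph{derives} the given formula for $a$ by solving the transport equation $i\partial_t a - i\varepsilon^{1-2n}\xi\cdot\nabla a - \varepsilon^{-2n}\beta G(\varepsilon^{2p}|a|^2)a=0$ via the method of characteristics, which explains where $a$ comes from (and shows the first bracket in (2) vanishes) but is not logically required to verify the proposition as stated.
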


\begin{proof}[Proof of Proposition \ref{prop Appx}]

Let 
\begin{align}
\Phi = \Phi (t,x) = x \cdot \xi +  \frac{1}{2n} t , 
\end{align}
and consider an approximate solution of the following form 
\begin{align}
v (t,x) = \varepsilon^p a (\varepsilon^{2n} (t+T) , \varepsilon (x +T\xi)) e^{i\Phi} . 
\end{align}

Then we compute
\begin{align}
\partial_t v & =  \varepsilon^p e^{i\Phi} a (\varepsilon^{2n} (t+T) , \varepsilon (x +T\xi))  (\frac{i}{2n}) +  \varepsilon^p e^{i\Phi} \partial_t a (\varepsilon^{2n} (t+T) , \varepsilon (x +T\xi)) \varepsilon^{2n} \\
\partial_{x_i} v & = \varepsilon^p e^{i\Phi} a (\varepsilon^{2n} (t+T) , \varepsilon (x +T\xi)) (i \xi_i) + \varepsilon^p  e^{i\Phi} \partial_{x_i} a (\varepsilon^{2n} (t+T) , \varepsilon (x +T\xi)) \varepsilon \\
\partial_{x_i}^2 v & = \varepsilon^p e^{i\Phi} a (\varepsilon^{2n} (t+T) , \varepsilon (x +T\xi)) (i \xi_i)^2 + 2 \varepsilon^p e^{i\Phi} \partial_{x_i} a (\varepsilon^{2n} (t+T) , \varepsilon (x +T\xi)) (i \xi_i) \varepsilon \\
& \quad + \varepsilon^p e^{i\Phi} \partial_{x_i}^2 a (\varepsilon^{2n} (t+T) , \varepsilon (x +T\xi)) \varepsilon^2
\end{align}
and 
\begin{align}
(i \partial_t + \frac{1}{2n} (-\Delta)^{n} ) v & = \cancel{-\frac{1}{2n} \varepsilon^p e^{i\Phi} a (\varepsilon^{2n} (t+T) , \varepsilon (x +T\xi)) }  +  i \varepsilon^{p+2n} e^{i\Phi} \partial_t a (\varepsilon^{2n} (t+T) , \varepsilon (x +T\xi)) \\
& \quad + \cancel{\frac{1}{2n} \varepsilon^p e^{i\Phi} a (\varepsilon^{2n} (t+T) , \varepsilon (x +T\xi)) } \\
& \quad - i \varepsilon^{p+1}  e^{i\Phi} \xi \cdot \nabla a (\varepsilon^{2n} (t+T) , \varepsilon (x +T\xi))\\
& \quad + \sum_{j=2}^{2n} \mathcal{O} (\varepsilon^{p+j}  e^{i\Phi} D_x^j a ) \\
& =   i \varepsilon^{p+2n} e^{i\Phi} \partial_t a (\varepsilon^{2n} (t+T) , \varepsilon (x +T\xi)) - i \varepsilon^{p+1}  e^{i\Phi} \xi \cdot \nabla a (\varepsilon^{2n} (t+T) , \varepsilon (x +T\xi))\\
& \quad + \sum_{j=2}^{2n} \mathcal{O} (\varepsilon^{p+j}  e^{i\Phi} D_x^j a ) .
\end{align}

Combining
\begin{align}
G(\abs{v}^2) v = \varepsilon^p  e^{i\Phi} G(\varepsilon^{2p} \abs{a}^2) a ,
\end{align}
we have
\begin{align}
(i \partial_t + \frac{1}{2n} (-\Delta)^n) v -  \beta (x) G(\abs{v}^2) v & = \varepsilon^{p+2n} e^{i \Phi} \sq{ i \partial_t a - i \varepsilon^{1-2n} \xi \cdot \nabla a  -  \varepsilon^{-2n}  \beta(x) G(\varepsilon^{2p} \abs{a}^2) a } \label{eq Trans}\\
& \quad + \varepsilon^{p+2n} e^{i \Phi} \sq{\sum_{j=2}^{2n} \mathcal{O} (\varepsilon^{j-2n}  e^{i\Phi} D_x^j a )  }  , \label{eq Err}
\end{align}
where $a$ and its derivatives are evaluated at $(t' , x') = (\varepsilon^{2n} (t+T) , \varepsilon (x + T\xi))$.

Now we wish to find $a$ such that $a$ solves,
\begin{align}
\begin{cases}
i \partial_t a - i \varepsilon^{1-2n} \xi \cdot \nabla a  - \varepsilon^{-2n}   \beta (\varepsilon^{-1} x - T\xi) G(\varepsilon^{2p}\abs{a}^2) a  =0 ,\\
a(0,x) = a_0 ,
\end{cases}
\end{align}
which is the first term in \eqref{eq Trans}, and  as a result, we can treat \eqref{eq Err} as an error term during this approximation process.

Using the method of characteristics, we set  $x(t) = x_0 - \varepsilon^{1-2n} t\xi$ for some $x_0 \in \R^d$ and $b(t) = a (t, x(t))$. 
We reduce the equation along the characteristic in the following form
\begin{align}\label{eq 2}
\frac{d}{dt} b =  -i \varepsilon^{-2n}  \beta(\varepsilon^{-1} x(t) - T\xi) G(\varepsilon^{2p} \abs{b}^2) b .
\end{align}
Notice that 
\begin{align}
\frac{d}{dt} \abs{b}^2 = 2\re [ - i \varepsilon^{-2n}  \beta(\varepsilon^{-1} x(t) - T\xi) G(\varepsilon^{2p} \abs{b}^2) \abs{b}^2 ] = 0 ,
\end{align}
hence $\abs{b}^2 = \abs{a_0(x_0)}^2 $ is a constant.

Then \eqref{eq 2} becomes
\begin{align}
\frac{d}{dt} b & = - i \varepsilon^{-2n}  \beta(\varepsilon^{-1}x (t) - T\xi) G(\varepsilon^{2p} \abs{a_0(x_0)}^2) b ,
\end{align}
then
\begin{align}
b & = a_0 (x_0) \,\exp\left( - i \varepsilon^{-2n} G(\varepsilon^{2p} \abs{a_0(x_0)}^2) \int_0^t \beta(\varepsilon^{-1} x(s) - T\xi) \, ds\right) .
\end{align}
Hence
\begin{align}
a (t,x) = a_0 (x+ \varepsilon^{1-2n} t \xi) \,\exp\left(-i \varepsilon^{-2n} G(\varepsilon^{2p} \abs{a_0(x + \varepsilon^{1-2n} t \xi) }^2) \int_0^t \beta (\varepsilon^{-1} x + \varepsilon^{-2n} (t-s)\xi -T\xi)  \, dx\right) .
\end{align}

Next, we will compute  $H^N, N \sim d/2$ norms of $a$  to control from above the $\mathcal{F}L^1$ norm of $a$ and its derivatives using the estimate $\norm{a}_{\mathcal{F}L^1(\mathbb{R}^d)} \lesssim \norm{a}_{H^{d/2+}(\mathbb{R}^d)}$. When computing the derivative of $a$, we have the following two cases
\begin{enumerate}
\item
if $\nabla$ falls on $a_0$, this is a good case, which gives a constant multiple of $a$
\item
if $\nabla$ falls on  in the exponential term, there are two subcases
\begin{enumerate}
\item 
$\nabla$ hits $G$ in the exponential term, and it contributes $\varepsilon^{-2n}$ from the phase and $\varepsilon^{2p}$ from the chain rule in $G$. 
\item
$\nabla$ hits the integral in the exponential term, and it contributes $\varepsilon^{-2n}$ from the phase and $\varepsilon^{-1}$ from $\beta$, also due to  $\beta$ being localized in a ball of radius $\varepsilon T$, we have an $\varepsilon^{\frac{d}{2}}$. Notice that we have the following property of $G$
\begin{align}
\abs{G (\varepsilon^{2p} x)} \lesssim  \varepsilon^{2p} |x|
\end{align}
for small enough $\varepsilon$ and bounded $x$. 
hence we have an $\varepsilon^{2p}$ from $G$. 
\end{enumerate}
\end{enumerate}
Hence 
\begin{align}
\norm{a}_{H_x^1} \lesssim \max \{ 1 , \varepsilon^{-2n +2p} , \varepsilon^{-2n + 2p - 1 + \frac{d}{2}} \}  .
\end{align}
These terms are acceptable, since we can ask $p >n$. In this case, $\varepsilon^{2p-2n}\ll 1$, and so we neglect that term in the rest of the estimates.

Moreover, we can compute higher Sobolev norms of $a$ following the same idea,
\begin{align}
\norm{a}_{H_x^1} & \lesssim  \max \{ 1  , \varepsilon^{-2n + 2p - 1 + \frac{d}{2}} \} , \\
\norm{a}_{H_x^2} & \lesssim \max \{ 1 , \varepsilon^{-2n + 2p - 2 + \frac{d}{2}} \}  , \\
 \norm{a}_{H_x^N} & \lesssim \max \{ 1 , \varepsilon^{ -2n  + 2p -N + \frac{d}{2}}\} .
\end{align}
In particular, we have
\begin{align}
\norm{a}_{H_x^{\frac{d}{2}+}} \lesssim \max \{ 1, \varepsilon^{- 2n +2p + \frac{d}{2} - \frac{d}{2}  -}\} =  \max \{ 1 , \varepsilon^{2p - 2n  -}\} ,
\end{align}
which implies
\begin{align}
\norm{a}_{\mathcal{F}L^1} & \lesssim \norm{a}_{H_x^{\frac{d}{2}+}} \lesssim  \max \{ 1 , \varepsilon^{2p - 2n - }\} , \\
\norm{D_x^j a}_{\mathcal{F}L^1}  &  \lesssim  \max \{ 1 , \varepsilon^{2p - 2n -j - }\}  .
\end{align}
\end{proof}

\section{Proof of Main Theorem and Further remarks}\label{Main}
In this section, we prove the main result in this note Theorem \ref{thm Main} by combining the well-posedness and stability result and the approximation theorem.
\begin{proof}[Proof of Theorem \ref{thm Main}]
We define the function $v(t,x)$ satisfying the initial condition
\begin{align}
v(-T, x) = u_0 (x) := \varepsilon^p a_0 (\varepsilon (x+ T)) \,e^{ i (x \cdot \xi +  \frac{1}{2n} t)}.
\end{align}
given in Proposition \ref{prop Appx}.

We first observe that
\begin{align}
    \norm{v}_{L_t^{\infty} \mathcal{F}L^1} \lesssim \varepsilon^p \norm{a}_{L_t^{\infty} \mathcal{F}L^1}\lesssim \varepsilon^p 
\end{align}
so that \eqref{stability2} is satisfied for sufficiently small $\varepsilon$. Next, we use the estimates in Proposition \ref{prop Appx} for the norm $L_t^{\infty} \mathcal{F}L^1$ of $a$ and its derivatives to estimate
\begin{align}
    & \quad \norm{\int_{-T}^t e^{i(t-s) (-\Delta)^n/2n} [(i \partial_t + \frac{1}{2n}(-\Delta)^n)v -  \beta (x) G(\abs{v}^2) v] (s) \, ds}_{L_t^{\infty}\mathcal{F}L^1 ([-T,T] \times \R^d)} \\
    & \lesssim  \varepsilon^{p+2n} \sum_{j=2}^{2n} \varepsilon^{j-2n} \norm{D_x^j a}_{L_t^{\infty} \mathcal{F}L^1} \\
    & \lesssim \sum_{j=2}^{2n} \varepsilon^{p+2n} \max\{\varepsilon^{j-2n} , \varepsilon^{j-2n} \varepsilon^{2p-2n-j-} \} \\
    & \leq \max \{ \varepsilon^{p+2}, \varepsilon^{3p-2n- } \}.
\end{align}
where we need $3p -2n - \geq p$, hence $p>n$.

Thus, \eqref{eq2} is satisfied for sufficiently small $\varepsilon$ and  $p>n$. Using Proposition \ref{prop Stab}, we deduce the existence of $u$ such that
\begin{align}
    \norm{u - v}_{L_t^{\infty} \mathcal{F}L^1 ([-T,T] \times \R^d)} \lesssim  \max \{ \varepsilon^{p+2}, \varepsilon^{3p-2n- } \}.
\end{align}
and Theorem \ref{thm Main} is proven. 
\end{proof}

\begin{rmk}

The result can be extended to nonlinear generalized Schr\"odinger equations with linear combinations of power of the Laplacian by considering the scaling in the lowest order of the Laplacian and treating any higher order terms as error terms (which can be handled by Proposition \ref{prop Stab}). The local theory remains unaffected because the semigroup generated by the linear combination operators remains unitary. As a result, all the analysis can proceed in a similar manner.
\end{rmk}

\bibliographystyle{plain}
\bibliography{inverse}

\end{document}